 \newcommand {\theoremstyle} [1] { }
 \newenvironment{proof}{{\noindent\it\underline{Proof}}:}{\hfill$\Box$}
  \newenvironment{pot}{{\noindent\it\underline{Proof of Theorem \ref{Gronwall abstracto}}}:}{\hfill$\Box$}
    \newenvironment{pot2}{{\noindent\it\underline{Proof of Theorem \ref{main}}}:}{\hfill$\Box$}
 \newtheorem{thm}{Theorem}[section]
 \newtheorem{prop}{Proposition}[section]
 \theoremstyle{plain}
 \newtheorem{lem}[thm]{Lemma} 
 \newtheorem{cor}{Corollary}[section]
 \theoremstyle{definition}
 \newtheorem{defn}[thm]{Definition}
 \theoremstyle{remark}
 \newtheorem{rem}[thm]{Remark}
\def\R{\mathbb{R}}
\title{An abstract Gronwall inequality on a Banach lattice}
\date{}
\author{P. Amster and J. Epstein}
\begin{document}

\maketitle

	\begin{center}
 Departamento de Matem\'atica\\ Facultad de Ciencias Exactas y Naturales, Universidad de Buenos Aires\\ \& IMAS-CONICET\\ 
		Ciudad Universitaria - Pabell\'on I, 1428, Buenos Aires, Argentina \\
		{\textbf{E-mail}: pamster@dm.uba.ar -- jepstein@dm.uba.ar}
		\vspace{0.2cm}

 \noindent 
Mathematics Subject Classification (2020). 34K38; 46B42. 

\noindent 
Keywords: Gronwall-Bellman lemma;  Functional-differential inequalities; Banach lattices; Maximum principle.

	\end{center}
\begin{abstract} 
An abstract version of the celebrated inequality is described by means of the spectral bound of 
an  operator defined on a Banach lattice. As a consequence, uniqueness and continuous dependence results for the general semilinear problem $Lu=N(u)$ are established and a connection with the maximum principle is explored. 
     
\end{abstract}

\section{Introduction}

   The Gronwall-Bellman inequality, in its simplest formulation, can be stated as follows: 
 
 \begin{lem} \label{Gronwall clasico}
Assume that  $x\in C[a,b]$  satisfies
$$ x(t)\leq A+B\int_{a}^{t}x(s)ds$$
for all $t\in [a,b]$ for some constants $A\in\R$ and $B\ge 0$.  Then 
$$x(t)\leq Ae^{B(t-a)} \qquad t\in [a, b].$$\end{lem}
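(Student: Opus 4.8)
The plan is to convert the integral inequality into a differential one by promoting its right-hand side to an auxiliary function. First I would set
$$u(t) := A + B\int_a^t x(s)\,ds, \qquad t\in[a,b].$$
Since $x\in C[a,b]$, the fundamental theorem of calculus gives $u\in C^1[a,b]$ with $u'(t)=Bx(t)$ and $u(a)=A$; moreover the hypothesis reads exactly $x(t)\le u(t)$ for every $t$.

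Next I would exploit the sign condition $B\ge 0$: multiplying $x(t)\le u(t)$ by $B$ and recalling $u'=Bx$ yields the differential inequality
$$u'(t)=Bx(t)\le Bu(t),\qquad t\in[a,b].$$
The natural device is then the integrating factor $e^{-B(t-a)}>0$. Computing
$$\frac{d}{dt}\Big(e^{-B(t-a)}u(t)\Big)=e^{-B(t-a)}\big(u'(t)-Bu(t)\big)\le 0,$$
one sees that $t\mapsto e^{-B(t-a)}u(t)$ is non-increasing on $[a,b]$. Evaluating at $t=a$ gives $e^{-B(t-a)}u(t)\le u(a)=A$, that is, $u(t)\le Ae^{B(t-a)}$.

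Finally I would chain the two bounds: since $x(t)\le u(t)\le Ae^{B(t-a)}$, the claim follows on all of $[a,b]$. The only delicate point is the implication $x\le u \Rightarrow Bx\le Bu$, which uses $B\ge 0$ in an essential way (and is precisely the scalar shadow of the positivity that will be needed in the lattice setting); no sign restriction on $A$ is required, since the integrating-factor argument is insensitive to it. I expect the genuine difficulty to surface only when this elementary template is lifted to the abstract framework, where the explicit integrating factor must be replaced by an argument resting on the spectral bound of the relevant operator together with the order structure of the Banach lattice.
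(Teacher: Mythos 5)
Your proof is correct: the auxiliary function $u(t):=A+B\int_a^t x(s)\,ds$, the identity $u'=Bx$, the use of $B\ge 0$ to pass from $x\le u$ to $u'\le Bu$, and the integrating factor $e^{-B(t-a)}$ give $u(t)\le Ae^{B(t-a)}$, and chaining with $x\le u$ finishes the argument. However, it follows a genuinely different route from the paper's. The paper never runs this differential-inequality computation; instead it observes that $y(t):=Ae^{B(t-a)}$ solves the integral equation $y(t)=A+B\int_a^t y(s)\,ds$ exactly, so that $z:=x-y$ satisfies $z(t)\le B\int_a^t z(s)\,ds$, reducing the lemma to its homogeneous case $A=0$ (Lemma \ref{Gronwall cero}). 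That case is then deduced from Theorem \ref{Gronwall abstracto}: the Volterra operator $Kz(t)=\int_a^t z(s)\,ds$ is positive and quasinilpotent, hence $\rho_K=0$, and for $B>0$ the hypothesis reads $sz\le Kz$ with $s:=\tfrac1B>\rho_K$, whence $z\le 0$ by positivity of the resolvent $(sI-K)^{-1}$. What your approach buys is a completely elementary, self-contained proof requiring nothing beyond the fundamental theorem of calculus; what the paper's approach buys is the exhibition of the classical lemma as a special instance of the lattice/spectral-bound framework that is the subject of the paper --- the explicit integrating factor $e^{-B(t-a)}$, which your argument needs, is precisely what gets replaced there by the abstract positivity of $(sI-K)^{-1}$. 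Your closing remark anticipates this trade-off accurately.
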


\noindent As it is known, this result is crucial  in order to prove the uniqueness and continuous dependence of the solutions for the initial value problem 
\begin{equation}
    \label{ivp}
    x'(t)=f(t,x(t)), \qquad x(a)=x_0 
\end{equation}  
for $f:U\subset \R^{n+1}\to\R^n$ continuous and locally Lipschitz with respect to its second variable. 
Originally established in \cite{gro}, several extensions 
of the preceding lemma have been obtained. 
An early survey of results can be found in   
\cite{cha} and the references therein. Various generalizations 
include discrete analogues,
nonlinear cases, 
inequalities in partially 
ordered Banach Spaces, Stieltjes integrals and applications to singular, partial, fractional differential equations, 
among others, see e.g. \cite{AT,cha, drag, erbe, gall, ming,  ro, ye}. 

The purpose of this paper is to formulate an abstract version of the celebrated inequality and apply it  to a 
 general problem of the form
\begin{equation}\label{L=N}
Lx=N(x),\qquad \mathcal P x=x_0.
\end{equation}
Here,  $L:D\to X$ is a linear operator, where $X$ is a normed space, $D\subset X$ is a dense subspace, $N:X\to X$ is continuous  and $\mathcal P:D\to D$ is a  projector onto the kernel of $L$. 
The fact that the initial value problem is a particular case of (\ref{L=N})  becomes  
obvious 
after
setting $X:=C[a,b]$, $D:=C^1[a,b]$, 
$Lx:=x'$ and $N(x)(t):=f(t,x(t))$. 
Here, the role of $\mathcal P$ is understood 
if we observe, in the first place, that the kernel of $L$ is the subspace of constant functions, which can be identified with $\R^n$ and, in the second place, that the integral form of 
 (\ref{ivp}) is  readily written   as a functional equation
\begin{equation}\label{abst-inteq}
x= x_0 + KN(x),     
\end{equation}
where $K\varphi(t):=\int_{a}^t \varphi(s)\, ds.$  It is noticed that the original problem is recovered by applying $L$ at both sides of the equality, which yields the identity $Lx=N(x)$ and, in turn, replacing again in (\ref{abst-inteq}) this means that $x= x_0 + KLx$. 
With this in mind, for the abstract problem (\ref{L=N}) we shall always assume that $L$ has a (not necessarily compact) right inverse $K:X\to X$ and  that $\mathcal Px:= x - KLx$.

With the aim of introducing a general abstract version 
of the Gronwall lemma, let us start by noticing that the function $y(t):= Ae^{B(t-a)}$ satisfies
$$y(t)=A+B\int_a^t y(s)ds. 
$$
Thus, if $x$
verifies the assumption of the previous lemma, then taking $z(t):= x(t)-y(t)$ it is obtained 
\begin{equation}
    \label{gron-weak}z(t)\le B\int_a^t z(s)ds
\end{equation}
for all $t\in [a,b]$. Next, observe that the statement of Lemma \ref{Gronwall clasico} simply says that $x\le y$; 
in other words, the result  is in fact equivalent to its  particular case $A=0$, that is:  

 \begin{lem} \label{Gronwall cero}
Assume that  $z\in C[a,b]$  satisfies
(\ref{gron-weak})
for all $t\in [a,b]$ and  some constant $B\ge 0$.  Then $z(t)\le 0$ for all $t\in [a,b]$.
\end{lem}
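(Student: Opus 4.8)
The plan is to reduce the integral inequality to a differential inequality for the antiderivative of $z$ and then exploit a suitable integrating factor. First I would introduce
$$w(t):=\int_a^t z(s)\,ds, \qquad t\in[a,b].$$
Since $z$ is continuous, the fundamental theorem of calculus guarantees that $w\in C^1[a,b]$ with $w'(t)=z(t)$ and $w(a)=0$. With this notation the hypothesis (\ref{gron-weak}) reads $z(t)\le Bw(t)$, that is, $w'(t)\le Bw(t)$ for every $t\in[a,b]$.

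Next I would multiply through by the strictly positive integrating factor $e^{-B(t-a)}$, which turns the differential inequality into a statement about monotonicity:
$$\frac{d}{dt}\left(e^{-B(t-a)}w(t)\right)=e^{-B(t-a)}\bigl(w'(t)-Bw(t)\bigr)\le 0.$$
Hence the function $t\mapsto e^{-B(t-a)}w(t)$ is non-increasing on $[a,b]$, and since it vanishes at $t=a$, it remains $\le 0$ throughout the interval. Because $e^{-B(t-a)}>0$, this forces $w(t)\le 0$ for all $t\in[a,b]$.

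Finally I would return to the pointwise bound coming from the hypothesis: since $B\ge 0$ and $w(t)\le 0$, the inequality $z(t)\le Bw(t)$ immediately yields $z(t)\le 0$ for all $t\in[a,b]$, which is the desired conclusion. I do not anticipate a genuine obstacle here, as the argument is elementary; the only point requiring a little care is the passage from the sign of the derivative to the sign of $w$ itself, which is legitimate precisely because the continuity of $z$ ensures that $w$ is $C^1$ (and hence that the monotonicity conclusion applies). The degenerate case $B=0$ is subsumed in the same computation, since then (\ref{gron-weak}) already reads $z(t)\le 0$ directly.
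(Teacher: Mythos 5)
Your proof is correct, but it follows a genuinely different route from the paper. You give the classical elementary argument: pass to the antiderivative $w(t)=\int_a^t z(s)\,ds$, note that the hypothesis becomes the differential inequality $w'\le Bw$ with $w(a)=0$, apply the integrating factor $e^{-B(t-a)}$ to get monotonicity of $e^{-B(t-a)}w(t)$, and read off first $w\le 0$ and then $z\le Bw\le 0$. The paper instead deduces the lemma from its abstract Theorem \ref{Gronwall abstracto}: on the Banach lattice $X=C[a,b]$ the Volterra operator $Kx(t)=\int_a^t x(s)\,ds$ is positive and quasinilpotent, so its spectral bound is $\rho_K=0$; since the case $B=0$ is trivial, one takes $s:=1/B>\rho_K$, observes that (\ref{gron-weak}) says exactly $sz\le Kz$, and concludes $z\le 0$ because the resolvent $(sI-K)^{-1}$ is a positive operator (this positivity being the content of Proposition \ref{resolvent}, the Laplace-transform formula for resolvents of positive operators). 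Your argument is shorter and entirely self-contained, but it leans on features specific to $C[a,b]$: the fundamental theorem of calculus and the pointwise order. The paper's derivation is precisely the point of the article: once the lemma is recognized as the statement ``$sz\le Kz$ with $s>\rho_K$ implies $z\le 0$'' for a positive operator, the identical one-line argument also yields the discrete Gronwall inequality, Stieltjes-integral versions, and the maximum principle for $-\Delta$, none of which are reachable by the integrating-factor computation.
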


An abstract version of the preceding result 
can be formulated within the context of Banach lattices which, roughly speaking,
are ordered Banach spaces with  binary supremum and infimum functions. A more detailed definition shall be given in Section \ref{resolv}. 

The following theorem is an immediate consequence of known results; the contribution of the present work 
is just
to propose an interpretation as a generalized Gronwall-Bellman inequality and to explore some of its implications. 
 
\begin{thm}\label{Gronwall abstracto}
Let  $X$ be a real Banach lattice and let $K\in \mathcal L(X)$ be positive, that is, order preserving. Let $\rho_K$ be the {spectral bound} of $K$, defined as 
$$\rho_{K}:=\sup\lbrace \Re( \lambda ), \ \lambda\in \sigma(K)\rbrace.$$
If $sz\le Kz $ for some 
 $s>\rho_K$, then $z\le 0$. 
  \end{thm}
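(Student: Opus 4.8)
The plan is to recast the hypothesis as a statement about the resolvent of $K$ at the real point $s$ and then exploit its positivity. Rearranging $sz \le Kz$, the inequality is equivalent to $(sI-K)z \le 0$. Since $s$ is real and strictly larger than $\rho_K=\sup\{\Re(\lambda):\lambda\in\sigma(K)\}$, the point $s$ lies outside $\sigma(K)$, so $sI-K$ is boundedly invertible and the resolvent $(sI-K)^{-1}$ is well defined. If I can show that this resolvent is a positive (order-preserving) operator, the conclusion follows immediately: writing $w:=(sI-K)z\le 0$, the order-preserving property yields $z=(sI-K)^{-1}w\le (sI-K)^{-1}0=0$.

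Thus the crux is the positivity of $(sI-K)^{-1}$, and here I would invoke the Perron--Frobenius theory of positive operators on Banach lattices. The key classical fact is that for a positive operator $K$ the spectral radius $r(K)=\max\{|\lambda|:\lambda\in\sigma(K)\}$ belongs to $\sigma(K)$. Because $r(K)\ge 0$ is real, this forces $\rho_K\ge r(K)$; as the reverse inequality $\rho_K\le r(K)$ is automatic, one obtains $\rho_K=r(K)$. Consequently the hypothesis $s>\rho_K$ is the same as $s>r(K)$.

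With $s>r(K)$ in hand, the resolvent admits the Neumann expansion
$$(sI-K)^{-1}=\sum_{n=0}^{\infty}\frac{K^n}{s^{n+1}},$$
which converges in operator norm. Each summand $s^{-(n+1)}K^n$ is a positive operator, being a nonnegative scalar multiple of an iterate of the positive operator $K$; since the positive cone of $\mathcal L(X)$ is closed, the norm-convergent sum is again positive. This establishes $(sI-K)^{-1}\ge 0$ and completes the argument as outlined above.

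I expect the main obstacle to be the spectral identity $\rho_K=r(K)$ invoked in the second paragraph, which is precisely where the lattice structure and the positivity of $K$ enter in an essential way and which rests on the nontrivial fact that the spectral radius of a positive operator lies in its spectrum. Everything else --- the reformulation of the inequality, the Neumann series, and the final application of the positive resolvent --- is routine once this identity is available.
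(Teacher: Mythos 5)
Your proof is correct, but it establishes the key fact --- positivity of the resolvent $(sI-K)^{-1}$ --- by a genuinely different route than the paper. Both arguments share the same skeleton: rewrite the hypothesis as $(sI-K)z\le 0$ and apply the positive operator $(sI-K)^{-1}$. The paper gets positivity of the resolvent from the integral representation
$$[sI-K]^{-1}x=\lim_{r\to+\infty}\int_0^r e^{-ts}e^{tK}x\,dt,$$
valid for $s>\rho_K$ (Proposition \ref{resolvent}, quoted from Nagel's book), together with the observation that $e^{tK}$ is positive whenever $K$ is; no Perron--Frobenius input is needed, since for a bounded generator the growth bound of $e^{tK}$ automatically equals the spectral bound. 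You instead use the Neumann series $\sum_{n\ge 0} s^{-(n+1)}K^n$, whose positivity is trivial but which only converges for $s>r(K)$, and then close the gap between $r(K)$ and $\rho_K$ by invoking the classical theorem that the spectral radius of a positive operator on a Banach lattice lies in its spectrum, giving $\rho_K=r(K)$. Your reduction steps are all sound (the cone of a Banach lattice is indeed closed, so the norm limit of the positive partial sums is positive), so the proof stands, resting on Perron--Frobenius theory exactly where the paper rests on the Laplace-transform formula for the resolvent. What each approach buys: yours makes transparent that the elementary content of the theorem is the case $s>r(K)$, with the lattice-theoretic depth concentrated in the identity $\rho_K=r(K)$; the paper's semigroup formulation avoids that identity altogether and is the natural one to generalize to unbounded generators of positive semigroups.
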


As is well known, when $K$ is compact and  
the dimension of $X$ is infinite, it is verified that
 $0\in \sigma(K)$ and consequently   $\rho_K\ge 0$. 
In particular, if we consider
the Banach lattice   $X=C[a,b]$ with the usual pointwise order  
 and the 
 positive compact operator $K:X\to X$ given by $Kx(t):=\int_{a}^tx(s)ds$, then  $K$ is quasinilpotent, 
 i.e.   $\sigma(K)=\{0\}$ which, in turn, yields  $\rho_{K}=0$. 
 Thus, we are in condition of deducing 
 Lemma \ref{Gronwall cero} from Theorem \ref{Gronwall abstracto} in a straightforward 
manner. Indeed, since the case $B=0$ is trivial, we may assume $B>0$ and set $s:=\frac{1}{B}>\rho_K$; hence, the result follows because 
 (\ref{gron-weak}) 
simply says that  $ sz\le Kz.$

Furthermore, exactly as described before, 
a somewhat more general version 
is readily deduced from the obvious fact that
if $s>\rho_K$ then $sI-K$ is invertible. In particular, for arbitrary $A\in X$ we may set
$y=y(s,A)$ as the unique element of $X$ such that  $sy=A + Ky$, namely $y=(sI-K)^{-1}A$. 
Thus, the following corollary may be regarded as a general Gronwall lemma in its standard formulation:

\begin{cor} \label{y(s,A)} In the previous context, for any $s>\rho_K$ 
     and $A\in X$ it    holds that 
\begin{equation}\label{standard}
sx \le A + Kx \Longrightarrow x \le y(s,A).    
\end{equation}

\end{cor}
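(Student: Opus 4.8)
The plan is to reduce Corollary \ref{y(s,A)} to Theorem \ref{Gronwall abstracto} by an affine change of variable, exactly mirroring the passage from Lemma \ref{Gronwall clasico} to Lemma \ref{Gronwall cero} described in the introduction. First I would recall that since $s > \rho_K$, the operator $sI - K$ is invertible (this is the standard fact that the spectral bound controls invertibility of $sI-K$ for $s$ exceeding the supremum of the real parts of the spectrum), so the element $y = y(s,A) := (sI-K)^{-1}A$ is well defined and satisfies the identity $sy = A + Ky$ by construction.

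Next I would set $z := x - y$ and subtract this identity from the hypothesis $sx \le A + Kx$. The term $A$ cancels, giving $s(x-y) \le K(x) - K(y)$. Since $K$ is linear, the right-hand side is exactly $K(x-y)$, so I obtain $sz \le Kz$. At this point the hypotheses of Theorem \ref{Gronwall abstracto} are met: $K$ is positive, $s > \rho_K$, and $sz \le Kz$. Applying that theorem yields $z \le 0$, that is, $x - y \le 0$, which is precisely the desired conclusion $x \le y(s,A)$.

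The main subtlety to handle carefully is the subtraction step in the ordered setting: one must be sure that the inequality $sx \le A + Kx$ and the equality $sy = A + Ky$ combine correctly under the lattice order. This is legitimate because in a Banach lattice the order is compatible with the vector space operations, so subtracting the equality (an equality may be subtracted freely) from the inequality preserves the direction of the inequality, and linearity of $K$ lets us fold $Kx - Ky$ into $K(x-y)$. I do not expect any genuine obstacle here; the entire argument is a short, self-contained deduction once the invertibility of $sI - K$ and the construction of $y$ are in place, so the proof is essentially a one-line application of the preceding theorem after the change of variable.
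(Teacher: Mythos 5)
Your proposal is correct and follows essentially the same route as the paper: the corollary is presented there as an immediate consequence of Theorem \ref{Gronwall abstracto}, obtained ``exactly as described before'' in the introduction, namely by defining $y(s,A)=(sI-K)^{-1}A$ via invertibility of $sI-K$ and applying the theorem to $z:=x-y$, which satisfies $sz\le Kz$. Your extra care about subtracting the equality $sy=A+Ky$ from the inequality is well placed and unproblematic, since the lattice order is translation-invariant.
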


 Interestingly, most known (linear) versions of the  
 Gronwall Lemma involve quasinilpotent operators, 
 so Theorem \ref{Gronwall abstracto}
may be seen as a  unifying approach; 
however, the standard form expressed in (\ref{standard}) usually requires an explicit 
computation of the function $y$, which might 
be a difficult task. 
We observe that Corollary 
\ref{y(s,A)} 
is sharp, in the sense that if the inequality 
$sx\le A+Kx$ implies $x\le z$ for 
some $z$, then $y(s,A)\le z$. 
This is just a trivial consequence of  the 
definition of $y(s,A)$; however, it could  become a useful tool to establish the sharpness of 
some known estimates. 
Some discussion on this topic shall be tackled in Section 
\ref{extensions}. 

As a  consequence of the 
preceding results, we  establish the 
following maximum principle, which can be applied to 
problems of the form (\ref{L=N}):

\begin{thm}\label{main} 
Let $D$ be a dense subspace of a Banach lattice $X$, assume that $L:D\to X$ is  a linear operator with a positive  
right inverse $K\in \mathcal L(X)$ and define  $\mathcal Px:=x- KLx$. Fix $B\ge 0$ such that  $B\rho_K<1$. If $x\in D$ satisfies 
\begin{align}\label{inecuacion 1}
Lx &\leq B x,\\
\mathcal{P}x&\leq 0 
\end{align}
then $x\leq 0$.
\end{thm}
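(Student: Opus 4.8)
The plan is to reduce the two hypotheses to the single inequality $sz \le Kz$ demanded by Theorem \ref{Gronwall abstracto}, and then simply read off the conclusion. The two given inequalities conveniently point in the same direction: the constraint $\mathcal{P}x \le 0$ bounds $x$ from above by $KLx$, while $Lx \le Bx$, combined with the positivity of $K$, bounds $KLx$ from above by $BKx$. Chaining these should yield an inequality of the form $x \le BKx$, which is precisely a rescaling of the hypothesis of Theorem \ref{Gronwall abstracto} with the operator $K$.

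More precisely, first I would rewrite the second hypothesis $\mathcal{P}x \le 0$ as $x \le KLx$, directly from the definition $\mathcal{P}x := x - KLx$. Next, since $K \in \mathcal{L}(X)$ is linear and order preserving and the scalar $B$ satisfies $B \ge 0$, applying $K$ to the first hypothesis $Lx \le Bx$ gives $KLx \le K(Bx) = BKx$. Combining the two inequalities produces $x \le BKx$, the sought-after reduction.

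It then remains to invoke Theorem \ref{Gronwall abstracto}, splitting into two cases according to the value of $B$. If $B = 0$, the chained inequality already reads $x \le 0$ and nothing further is needed. If $B > 0$, I would set $s := 1/B$; the hypothesis $B\rho_K < 1$ is exactly $\rho_K < 1/B = s$, so $s > \rho_K$, and dividing $x \le BKx$ by $B > 0$ yields $sx \le Kx$. Theorem \ref{Gronwall abstracto}, applied with $z = x$, then delivers $x \le 0$.

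I do not anticipate a genuine obstacle: the statement is in essence a translation of Theorem \ref{Gronwall abstracto} into the language of the operator $L$ and its associated projector $\mathcal{P}$. The only points requiring a little care are the correct use of positivity — namely that for a \emph{linear} $K$ the property of being order preserving is equivalent to mapping the positive cone into itself, so that $a \le b \Rightarrow Ka \le Kb$ and scalar multiplication by $B \ge 0$ preserves order — and the separate treatment of the degenerate case $B = 0$, where the rescaling $s = 1/B$ is unavailable and Theorem \ref{Gronwall abstracto} cannot be applied directly, but where the conclusion $x \le 0$ is immediate from the chained inequality.
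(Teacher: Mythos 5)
Your proposal is correct and follows essentially the same route as the paper: apply the positive operator $K$ to $Lx \le Bx$, combine with $\mathcal{P}x \le 0$ (equivalently $x \le KLx$) to get $x \le BKx$, treat $B=0$ separately, and for $B>0$ invoke Theorem \ref{Gronwall abstracto} with $s = 1/B > \rho_K$. The paper's proof differs only cosmetically, writing the chain as $x = \mathcal{P}x + KLx \le BKx$.
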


 As it may be observed, 
even in the case in which $\mathcal P$ is continuous, the (onto) mapping $L$ is not necessarily a Fredholm operator.
Specifically, it may happen that $\ker(L)$ is infinite-dimensional: for example,  consider $X=C(\overline \Omega)$, where $\Omega\subset \R^n$ is a smooth bounded domain, and 
the 
operator $L:=-\Delta$,  
defined on a dense subspace of $X$. 
In spite of that, Theorem \ref{main}
 can  still be applied and allows  
 to give a simple 
 interpretation of the well-known maximum principle for the Laplacian operator. This connection shall be investigated in Section \ref{lapl}.

 The paper is organized as follows. In the next section, for the sake of completeness, we recall the basic definitions and  a well known integral formula for the resolvent of a positive bounded operator, which yields to  
 immediate proofs of Theorems \ref{Gronwall abstracto} and
 \ref{main}, carried out in   Section 
 \ref{dem-main}. 
 Applications to the standard continuous and  discrete cases are shown in Section \ref{discrete}, in which some novel proofs of known results are offered. 
 In Section \ref{lapl}, 
 we  present the maximum principle for a Laplacian operator, whose kernel is infinite-dimensional, as a particular application of our abstract Gronwall lemma.  
In Section \ref{uniq}, abstract versions 
 of uniqueness and continuous dependence results
 derived from Gronwall lemma are presented. In particular, under an appropriate local Lipschitz assumption it shall be shown  
 that the set of values $x_0\in\ker(L)$ for which a solution of (\ref{L=N}) exists is open. 
 Some  final remarks and comments are included 
 in Section \ref{disc}.

\section{An integral formula for the resolvent }
\label{resolv}

In this section, we recall some  basic facts concerning the integration of functions of a real variable in a Banach space $X$ 
and the Banach lattice structure. 
Most proofs are included with the aim of providing a self-contained exposition. 
 Throughout the section, the concept of integral for a   bounded function $f:[a,b]\to X$ shall be understood in the Riemann sense, that is: $f$ is integrable if the limit 
$$\int_a^b f(t)dt:=\lim_{|\pi|\to 0 } \sum_{i=0}^{n-1} f(c^\pi_i)(t_{i+1}-t_i)$$ exists, where  
$\pi=\{a=t_0<t_1<...<t_n=b\}$ is a partition with norm   
$|\pi|:=\max\{t_{j+1}-t_j\}$ and $c_i^\pi\in [t_i,t_{i+1}]$ is arbitrary for each $i$.  
 
\begin{defn}
A Banach lattice is a Banach space $X$ equipped with a compatible partial order $\le$, 
a  supremum function $\vee:X\times X\to X$ satisfying
$$x, y \le x\vee y,\qquad  x\vee y\le z \quad \hbox{for all $z\ge x,y$}
$$
and such that the absolute value 
defined by 
$$|x|:= x\vee -x
$$
verifies 
\begin{equation}\label{normal}
|x|\le |y|\Longrightarrow \|x\|\le \|y\|.    
\end{equation}
\end{defn}
\begin{rem}
It is observed that no mention of the infimum is required in the preceding definition,
since it can be defined by De Morgan's law, namely
$$x\wedge y:= -(-x\vee -y).
$$
\end{rem}

\begin{prop}\label{modulo igual a cero}
The following properties hold:

\begin{enumerate}
    \item $|x|\ge 0$. Consequently, $|x|\leq 0$ implies $x=0$.
    \item $|x+y|\le |x| + |y|$.
\end{enumerate}
\end{prop}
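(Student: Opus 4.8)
The plan is to derive both properties directly from the defining characterization of the supremum $\vee$, together with the compatibility of the order with the linear structure (so that inequalities may be added term by term and scaled by nonnegative constants), and the antisymmetry of the partial order.

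For the first property, I would begin by recording the two inequalities $x\le |x|$ and $-x\le |x|$, which are immediate from the fact that $|x|=x\vee(-x)$ is, by definition, an upper bound of both $x$ and $-x$. Adding these two inequalities yields $0=x+(-x)\le 2|x|$, and multiplying by the nonnegative scalar $\tfrac12$ gives $|x|\ge 0$. For the stated consequence, if $|x|\le 0$ then combining this with $|x|\ge 0$ and antisymmetry forces $|x|=0$; feeding $|x|=0$ back into $x\le|x|$ and $-x\le|x|$ gives $x\le 0$ and $-x\le 0$, that is $x\le 0$ and $x\ge 0$, whence $x=0$ once more by antisymmetry.

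For the triangle inequality, the idea is to exhibit $|x|+|y|$ as a common upper bound of $x+y$ and $-(x+y)$ and then invoke the \emph{least}-upper-bound clause of the definition of $\vee$. Concretely, adding $x\le |x|$ to $y\le |y|$ gives $x+y\le |x|+|y|$, while adding $-x\le |x|$ to $-y\le |y|$ gives $-(x+y)\le |x|+|y|$. Since $|x+y|=(x+y)\vee\big(-(x+y)\big)$ is the least element dominating both $x+y$ and $-(x+y)$, and $|x|+|y|$ is such a dominating element, the minimality property immediately yields $|x+y|\le |x|+|y|$.

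I do not expect a serious obstacle: both statements reduce to elementary manipulations of order inequalities, and the norm axiom (\ref{normal}) is not even needed. The only points deserving care are making explicit that the partial order is translation invariant and preserved by multiplication by nonnegative scalars, so that the step of adding inequalities is legitimate, and, in the second property, appealing to the minimality clause of $\vee$ rather than merely its upper-bound clause.
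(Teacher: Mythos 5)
Your proposal is correct and follows essentially the same argument as the paper: adding the inequalities $\pm x\le |x|$ to get $2|x|\ge 0$ for the first property, and exhibiting $|x|+|y|$ as a common upper bound of $\pm(x+y)$ and invoking the least-upper-bound clause of $\vee$ for the second. You merely spell out a few steps the paper leaves implicit (antisymmetry for the consequence $|x|\le 0\Rightarrow x=0$, and the compatibility of the order with addition and nonnegative scaling).
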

\begin{proof}
Because 
$|x|=x\vee -x \ge \pm x$, it follows that $2|x|=|x|+|x|\ge 0$ and the first property follows.  For the second one, simply observe that, since $|x|\ge \pm x$ and $|y|\ge \pm y$, it is deduced that 
$|x|+|y|\ge \pm (x+y)$. 
\end{proof}

\begin{prop} \label{equiv-norm} The norm of $|x|$ coincides with the norm of $x$. In particular, $x\to 0$ if and only if $|x|\to 0$. 
    
\end{prop}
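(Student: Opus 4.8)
The plan is to reduce the norm equality to the single algebraic identity $\big||x|\big| = |x|$ and then invoke the monotonicity axiom (\ref{normal}) in both directions. First I would observe that, by the first part of Proposition \ref{modulo igual a cero}, we have $|x|\ge 0$, whence $-|x|\le 0\le |x|$. Since the supremum $a\vee b$ equals $a$ whenever $a\ge b$ (because $a$ is then an upper bound of $\{a,b\}$ and is dominated by every other upper bound, by the defining property of $\vee$), it follows that
$$\big||x|\big| = |x|\vee(-|x|) = |x|.$$

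With this identity in hand, the norm equality is immediate. Applying (\ref{normal}) to the pair $(|x|,x)$, using that $\big||x|\big| = |x|\le |x|$, gives $\||x|\|\le\|x\|$. Applying (\ref{normal}) to the pair $(x,|x|)$, using that $|x|\le |x|=\big||x|\big|$, gives the reverse inequality $\|x\|\le\||x|\|$. Combining the two yields $\|x\|=\||x|\|$.

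Finally, the convergence statement is a direct consequence: since the norms coincide, for any sequence we have $\|x_n\|\to 0$ if and only if $\||x_n|\|\to 0$, that is, $x_n\to 0$ iff $|x_n|\to 0$. I do not anticipate a genuine obstacle here; the only points requiring care are the correct bookkeeping of which element plays each role in (\ref{normal}) and the elementary fact that $a\vee b=a$ when $a\ge b$, which must be extracted from the defining property of the supremum rather than assumed outright.
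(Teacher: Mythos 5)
Your proposal is correct and takes essentially the same route as the paper: both proofs establish the key identity $\bigl||x|\bigr| = |x|$ from the positivity of $|x|$ (Proposition \ref{modulo igual a cero}) together with the fact that $a\vee b=a$ when $a\ge b$, and then apply the monotonicity axiom (\ref{normal}) in each direction to get the two norm inequalities. The only cosmetic difference is that the paper derives the inequality $\|x\|\le \||x|\|$ from the general fact $y\le |y|$ rather than from the full identity, which changes nothing of substance.
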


\begin{proof}
    Let  $y:=|x|$  and observe, on the one hand,  that $|x|\le |y|$, so $\|x\|\le \|y\|$. 
    On the other hand, we know from the previous proposition that  $y\ge 0\ge -y$, so $|y|=y=|x|$, 
    whence  $\|y\|\le \|x\|$. 
\end{proof}

\medskip 

\noindent It is clear from the previous definitions that if 
$f:[a,b]\to X$ is an integrable function such that $f(t)\geq 0$ for all $t\in[a,b]$, then $\int_a^b f(t)dt\geq 0$.

We recall that an operator $M\in \mathcal L(X)$ is called \textit{positive} when it preserves order. Due to linearity, this is equivalent to saying that $Mx\ge 0$ for all $x\ge 0$. 
It is worth mentioning that,
in the specific context of Hilbert spaces, there is another notion of positive operator. 
Namely, when $X$ is a Hilbert space, it is  said that $M\in \mathcal L(M)$  is positive if $\langle Mx, x\rangle\geq 0$ for all $x\in X$ (in real Hilbert spaces, sometimes a self-adjointness assumption is also imposed). 
When both the lattice and Hilbert structures are present, the order preserving property may be related with the positiveness in the Hilbert sense, provided that certain compatibility between the cone of positive elements with the inner product is satisfied. However
the two notions are, in general, independent.

In order to complete the machinery 
needed for our results,   
 let us also recall that, for $M\in \mathcal L(X)$,  the $C_0$-semigroup $T_M:[0,+\infty)\to \mathcal L(X)$  generated by $M$
 is defined as the operator exponential function given by


$$T_M(t)=e^{tM}:=\sum_{k=0}^\infty \frac{t^kM^k}{k!}.$$
If furthermore $M$ is positive, then  $T_M$ is positive, although  the converse is not true: observe, for instance, that  $e^{-tI}=e^{-t}I$. The following result can be found in  \cite[Ch. C-III]{Na}.
\begin{prop}\label{resolvent} 
Let $M\in \mathcal L(X)$ be positive 
and let $\rho_M$ be the  spectral bound of $M$. Then, for every $x\in X$ and $\lambda\in \mathbb{C}$ such that $\Re(\lambda)>\rho_M$ we have the following identity: 
$$
[\lambda I-M]^{-1}x=\lim_{r\to+\infty}\int_0^r e^{-t(\lambda I-M)} xdt.$$

\end{prop}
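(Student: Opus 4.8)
The plan is to prove Proposition \ref{resolvent}, the integral formula for the resolvent of a positive operator, by combining the Laplace-transform representation of the resolvent with the convergence guaranteed by the spectral bound. First I would recall the classical semigroup identity: for any $M\in\mathcal L(X)$ generating the uniformly continuous semigroup $T_M(t)=e^{tM}$, and for $\lambda\in\C$ with $\Re(\lambda)$ larger than the growth bound of the semigroup, the resolvent is given by the Laplace transform
$$[\lambda I-M]^{-1}x=\int_0^\infty e^{-\lambda t}T_M(t)x\,dt=\lim_{r\to+\infty}\int_0^r e^{-\lambda t}e^{tM}x\,dt.$$
Since $e^{-\lambda t}e^{tM}=e^{-t(\lambda I-M)}$ (the two factors commute), this is exactly the claimed formula, so the whole content reduces to verifying that the hypothesis $\Re(\lambda)>\rho_M$ is enough to make this integral converge and represent the resolvent.

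The key step, then, is to link the spectral bound $\rho_M$ to the growth of the semigroup. For a bounded operator $M$ the spectral radius and spectral mapping are well understood, and the classical fact I would invoke (this is precisely what \cite[Ch.\ C-III]{Na} supplies in the Banach lattice setting) is that for positive operators the spectral bound $\rho_M$ coincides with the exponential growth bound $\omega(T_M)$ of the generated semigroup. Positivity is essential here: for general $M$ one only has $\rho_M\le\omega(T_M)$, and the reverse inequality can fail, but on a Banach lattice with a positive generator the two coincide. Granting $\rho_M=\omega(T_M)$, any $\lambda$ with $\Re(\lambda)>\rho_M$ satisfies $\Re(\lambda)>\omega(T_M)$, so there exist constants $C\ge 1$ and $\omega<\Re(\lambda)$ with $\|T_M(t)\|\le Ce^{\omega t}$; hence $\|e^{-\lambda t}T_M(t)x\|\le C\|x\|e^{(\omega-\Re(\lambda))t}$ is exponentially decaying and the improper integral converges absolutely in the Banach-space norm.

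With convergence secured, I would verify that the limit $R:=\int_0^\infty e^{-t(\lambda I-M)}x\,dt$ is genuinely $[\lambda I-M]^{-1}x$. The cleanest way is to apply $(\lambda I-M)$ to the truncated integral and integrate the relation $\frac{d}{dt}e^{-t(\lambda I-M)}=-(\lambda I-M)e^{-t(\lambda I-M)}$, obtaining
$$(\lambda I-M)\int_0^r e^{-t(\lambda I-M)}x\,dt=x-e^{-r(\lambda I-M)}x.$$
Since $\|e^{-r(\lambda I-M)}x\|=\|e^{-\lambda r}T_M(r)x\|\to 0$ as $r\to+\infty$ by the same exponential estimate, and $\lambda I-M$ is continuous (so commutes with the limit), letting $r\to+\infty$ gives $(\lambda I-M)R=x$, and an identical computation with the factors in the reverse order gives $R(\lambda I-M)=x$ on the domain; as $M$ is bounded, $\lambda I-M$ is invertible precisely when $\lambda\notin\sigma(M)$, which holds since $\Re(\lambda)>\rho_M\ge$ every real part of the spectrum. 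Therefore $R=[\lambda I-M]^{-1}x$, as claimed.

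The main obstacle I expect is the identification $\rho_M=\omega(T_M)$ for positive operators, rather than the elementary integration steps. For a general bounded generator the spectral bound need not control the semigroup growth, so the positivity hypothesis must be used in an essential way, and the honest proof of this equality is exactly the nontrivial input that the reference \cite[Ch.\ C-III]{Na} provides; everything else is a routine Laplace-transform argument. Since the statement explicitly cites that reference, I would simply invoke this equality and focus the written proof on the convergence estimate and the verification that the resulting limit inverts $\lambda I-M$.
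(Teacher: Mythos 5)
Your overall strategy --- truncate the Laplace integral, use the identity
$$(\lambda I-M)\int_0^r e^{-t(\lambda I-M)}x\,dt = x - e^{-r(\lambda I-M)}x,$$
and let $r\to+\infty$ using an exponential decay estimate --- is exactly the content of the remark the paper places after the proposition; the paper itself offers no proof at all, only the citation to \cite[Ch. C-III]{Na} together with that remark. Your elementary integration steps and the final identification of the limit with the resolvent are correct.

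The genuine problem is the key lemma you invoke and attribute to \cite{Na}. The statement ``for a positive generator on a Banach lattice the spectral bound coincides with the growth bound, $\rho_M=\omega(T_M)$'' is \emph{false} in that generality: there exist positive $C_0$-semigroups on Banach lattices whose (unbounded) generator satisfies $\rho_M<\omega(T_M)$ strictly (the classical Greiner--Voigt--Wolff example), and this failure is precisely why Ch.\ C-III of \cite{Na} proves something weaker and subtler instead, namely that for positive semigroups the resolvent is given by the \emph{improperly} convergent Laplace integral whenever $\Re(\lambda)$ exceeds the spectral bound --- which is the result the paper actually cites. So the reference does not supply the fact your proof rests on, and that fact is not true as you state it. What rescues the argument is a hypothesis you never use: here $M\in\mathcal L(X)$ is \emph{bounded}, so the semigroup $e^{tM}$ is uniformly continuous and the spectral mapping theorem for the entire functional calculus gives $\sigma(e^{tM})=e^{t\sigma(M)}$; hence the spectral radius of $e^{tM}$ equals $e^{t\rho_M}$, and combining this with Gelfand's formula and the standard expression of the growth bound yields $\omega(T_M)=\rho_M$ --- with no positivity whatsoever. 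If you replace your positivity-based justification by this boundedness-based one, your proof becomes complete (and shows, incidentally, that the proposition holds for every bounded operator, positivity entering the paper's statement only because it is the standing assumption of the cited source, where generators need not be bounded and the issue is real).
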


\begin{rem} The preceding result can be intuitively explained in terms of the 
Laplace transform. When $X=\R$, it  simply establishes that the Laplace transform of the real 
function $f(t):=e^{Mt}$ is given
by $\mathfrak L f(\lambda)= \frac 1{\lambda -M}$ 
for $\Re(\lambda)>\Re (M)$. 
For the general case, it may be observed  that
$$\int_0^r e^{-t(\lambda I - M)}x dt=[\lambda I-M]^{-1}[I-e^{-r(\lambda I - M)}] x.$$
    Thus, the result simply expresses the fact that 
    $e^{-r(\lambda I - M)}\to 0$ as $r\to+\infty$ or, equivalently, that the trivial equilibrium of the system $y'(t)= -(\lambda I-M)y(t)$ is asymptotically stable. 
\end{rem}

\section{Rapid proofs of Theorems \ref{Gronwall abstracto} and \ref{main} } 
\label{dem-main}

With the setting of the preceding section in mind, let us firstly proceed to a proof of 
the abstract  Gronwall inequality. 
\medskip 

\begin{pot} Because $K\in \mathcal L(X)$  is positive, it follows that   $e^{tK}$ is positive. Hence, given $x\in X$ such that $x\geq 0$ we obtain, for arbitrary  $s>\rho_K$:
$$[sI-K]^{-1}x=\lim_{r\to+\infty}\int_0^r e^{-ts} e^{tK} x\, dt\geq 0.$$
Thus, if 
$sz\le Kz$ then $(sI-K)z\le 0$ and the result is deduced by applying the positive operator $(sI-K)^{-1}$ at both sides of the inequality.
 \end{pot}
\medskip 

 
  Next, we proceed with the maximum 
 principle. 
 \medskip 
 
  \begin{pot2}
    To begin, observe that the case $B= 0$ is straightforward, since 
  (\ref{inecuacion 1})  implies $KLx\le 0$, whence
  $$x=\mathcal Px + KLx \le 0.
  $$
   
Next, assume $B>0$ and apply $K$ at both sides of inequality 
(\ref{inecuacion 1}) to obtain 
 $$
 KLx\leq BKx.
 $$
Because  
  $KLx=x-\mathcal{P}x$, 
  it is deduced that  
 $$x\leq \mathcal{P} x +BK x\leq BK x.$$
 Thus, 
 $$B^{-1}x \le Kx$$
 and the conclusion follows from   
 Theorem  \ref{Gronwall abstracto} and the fact that $B^{-1}>\rho_K$.  \end{pot2}

 \section{New insights about (not so) old results}
\label{extensions}
\label{discrete}

As mentioned in the introduction, the abstract 
formulation of the Gronwall lemma presented in this work allows to give a simple proof, not only of the standard original version but also of many of the extensions that can be found in the literature. For instance, a rather 
popular one is the following, which is valid   for  arbitrary continuous  functions 
$A,  B$ and $C$  with $B, C\ge 0$: if
$$x(t)\le A(t) + C(t)\int_a^t B(s)x(s)ds\qquad a\le t\le b$$
then 
\begin{equation}
    \label{nonconstant A} 
x(t)\le A(t) + C(t) \int_a^t A(s)B(s)e^{\int_s^t B(r)C(r)dr}ds.
\end{equation}
According to our previous setting, here $Kx(t):= C(t)\int_a^t B(s)x(s)ds$ and 
the estimate is optimal because the right-hand side term in (\ref{nonconstant A}) is the (unique) solution of  the fixed point equation $y = A + Ky$. 
When $A\ge 0$, it is often mentioned that the simpler (and perhaps more useful)
estimate  
$$x(t)\le A(t) e^ {C(t) \int_a^t B(s) \, ds}
$$
 also holds, provided that $A$ and $C$ are nondecreasing; 
however, the latter inequality
is unsharp, except for some trivial cases (e.g. $A, C$ constant, or $B\equiv 0$). 
Similarly, we may consider 
the more general operator
$$Kx(t):= \int_a^t k(t,s)x(s)\, ds 
$$
with $k\ge 0$ continuous, which is also quasinilpotent. 
Indeed, due to compactness it suffices to prove that the problem $x = \sigma Kx$  has no nontrivial solutions for arbitrary $\sigma\ne 0$. 
Suppose that $x$ is such a solution and let $t_0\in[a,b]$ be the supremum of those values of $t$ such that $x\equiv 0$ on $[a,t]$. If $t_0<b$, then we may fix $\delta>0$ such 
that $\delta|\sigma|\|k\|_\infty <1$ and $x\not\equiv 0$   on $J:=[t_0,t_0+\delta]$. Thus, for $t\in J$ we have
$$|x(t)| \le |\sigma|\int_{t_0}^t|k(t,s)x(s)|\, ds \le \delta|\sigma|\|k\|_\infty |\|x|_J\|_\infty,
$$
a contradiction. 
This proves that the conclusion of Corollary \ref{y(s,A)} 
with $s=1$ holds, although the exact computation of 
$y=y(1,A)=(I-K)^{-1}A$ is not always possible. 
In \cite{drag, mitri} it is mentioned that  
$$x(t)\le \hat A(t)  e^{
{\int_a^t \hat k(t,s)\, ds }}
$$
where
$$\hat A(t):= \max_{a\le r\le t} A(r), 
\quad \hat k(t,s):= \max_{s\le r\le t} k(r,s).$$
Again, this estimate is not 
optimal, 
although the  
same works refer also to the one obtained in \cite{CM}, which is sharp but not in closed form, namely
$$x(t)\le A(t) + \int_a^t R(t,s)A(s)\, ds,
$$
where 
$$R(t,s):=\sum_{n=1}^\infty k_n(t,s)$$
is the resolvent kernel of $k(t,s)$, 
and $k_n$ are the iterated kernels of 
$k(t,s)$. 
It may be noticed, 
in our abstract  
setting, that the equality $y=A+ Ky$
yields
$$y= A + K(A+Ky)=\ldots = \sum_{j=0}^N K^jA + K^{N+1}y,
$$
so the previous expression is clear because, when 
$Kx(t)=\int_a^tk(t,s)x(s)\, ds$, it is proven by induction that
$$|K^jy(t)|\le \|y\|_\infty\|k\|^n_\infty \frac{(t-a)^n}{n!}. 
$$
For a general quasinilpotent operator $K$, it may not hold that $K^Ny\to 0$, so the formula $y= \displaystyle\sum_{j=0}^\infty K^jA$ cannot 
 be deduced. In any case, if everything else fails,  
 it is always possible to express the value $y(1,A)$ using the non closed-form formula provided by Proposition \ref{resolvent}, that is
 $$y = \int_0^{+\infty} e^{-t(I-K)}A\, dt.
 $$

 To conclude this section, we remark  that the abstract formulation allows immediate extensions of the 
 preceding results to other contexts, 
 in which the  continuity assumptions 
 are relaxed 
 or more general  integral operators are involved. As an example, consider the very recent paper {\cite{gall}}, which is devoted to an inequality with a Stieltjes integral
 \begin{equation}\label{stiel}
 x(t)\le A(t) + \int_a^t x(s)\, dP(s),
  \end{equation}
 where $P$ is a nondecreasing function such that $P(t) < P(t^-) + 1$ for all $t$ and $K$ is  integrable with respect to $dP$. An upper estimate for $x$ is obtained in terms of 
 the generalized exponential function and, furthermore, 
 the estimate turns to an identity if equality in (\ref{stiel}) holds. 
 This fully agrees with our  previous comments regarding the optimality in the Gronwall lemma.

\subsection{Finite dimensional case}

A straightforward application of the preceding results is obtained when $X=\R^n$ in which, for convenience, the operator $K$ shall be identified with its associated matrix. Let us consider in $X$ the coordinate-wise order, that is, the order induced by the cone of those vectors with nonnegative coordinates. It is readily verified that $K$ is positive if and only if all its entries are 
nonnegative and, as  is well known, the value $\rho_K\ge 0$ is the Perron root 
of $K$.   
A matrix version of Corollary \ref{y(s,A)} reads as follows.

\begin{lem}\label{gronwall-Rn}
 Assume that $K\in \R^{n\times n}$  has nonnegative entries 
 and let $A\in \R^n$ and $B>0$ with $B\rho_K<1$. 
 If
 $x\in \R^n$ is such that 
 $$
 x_i\le A_i + B\sum_{j=1}^n k_{ij}x_j
 $$
  for all $i$, then  $x_i\le y_i$ for all $i$, where 
 $$y:= [I-BK]^{-1}A.
 $$
\end{lem}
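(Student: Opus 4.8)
The plan is to recognize this statement as a direct specialization of Corollary \ref{y(s,A)} to the Banach lattice $X = \R^n$, once the operator and the scalar $s$ are correctly identified. First I would equip $\R^n$ with the coordinate-wise order, which makes it a real Banach lattice whose positive cone consists of the vectors with nonnegative coordinates, exactly as described at the beginning of this subsection. In this setting the componentwise hypothesis $x_i \le A_i + B\sum_{j=1}^n k_{ij} x_j$ is nothing but the single vector inequality $x \le A + (BK)x$.

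Next I would introduce the operator $\tilde K := BK$ and check that it meets the hypotheses of Corollary \ref{y(s,A)}. Since $B > 0$ and $K$ has nonnegative entries, $\tilde K$ again has nonnegative entries and is therefore positive, i.e. order preserving. The point that requires a little care is the behavior of the spectral bound under positive scaling: because $\sigma(BK) = B\,\sigma(K)$ for $B > 0$, one obtains $\rho_{\tilde K} = B\rho_K$. Hence the standing assumption $B\rho_K < 1$ reads precisely as $\rho_{\tilde K} < 1$.

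With these identifications in hand, I would apply Corollary \ref{y(s,A)} taking $s := 1$ and $\tilde K$ in place of $K$. The required condition $s > \rho_{\tilde K}$ holds because $1 > B\rho_K$, and the vector inequality above is exactly $sx \le A + \tilde K x$. The corollary then yields $x \le y(1,A) = (I - \tilde K)^{-1}A = (I - BK)^{-1}A = y$, which is the desired conclusion; the invertibility of $I - BK$ needed to define $y$ is automatic, since $\Re(1) = 1 > \rho_{BK}$ forces $1 \notin \sigma(BK)$.

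I do not anticipate a genuine obstacle here, as the argument is essentially a translation of an already-established abstract result into the matrix language. The only steps deserving explicit verification are that $\R^n$ with the coordinate-wise order is a Banach lattice in which positivity of a matrix coincides with nonnegativity of its entries, and the scaling identity $\rho_{BK} = B\rho_K$; both are standard, the latter following at once from $\sigma(BK) = B\,\sigma(K)$ together with the fact that $\rho_K$ is the (nonnegative) Perron root of $K$.
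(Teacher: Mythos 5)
Your proposal is correct and matches the paper's intent: the paper offers no separate proof of Lemma \ref{gronwall-Rn}, presenting it simply as ``a matrix version of Corollary \ref{y(s,A)}'' in the coordinate-wise ordered lattice $\R^n$, which is exactly what you carry out. Your choice of absorbing $B$ into the operator (taking $\tilde K=BK$, $s=1$) is trivially equivalent to the paper's habitual normalization $s=1/B$ (with $A$ replaced by $A/B$), since $(I-BK)^{-1}A=\bigl(\tfrac1B I-K\bigr)^{-1}\tfrac{A}{B}$.
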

A particular instance 
corresponds 
to the  discrete Gronwall Lemma, which has many applications in the context of difference equations of the form
$$x_{k+1} - x_k = f(k,x_k) 
$$
for $a\le k< b$, where $a<b$ are integers. 
Specifically, 
\begin{lem}(Discrete Gronwall inequality)
    Assume that 
    $$x_i\le A_i + \sum_{j=a}^{i-1} B_jx_j\qquad a\le i \le b$$
with $B_i\ge 0$ for all $j$. 
Then 
$$
x_i\le A_i + \sum_{j=a}^{i-1} A_jB_j {\prod_{s=j+1}^{i-1}(1+B_s)}\qquad a\le i\le b.$$
\end{lem}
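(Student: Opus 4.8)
The plan is to realize the discrete Gronwall inequality as a special case of the matrix version in Lemma \ref{gronwall-Rn}, rather than attacking it directly by induction. First I would fix the index set and set up the ambient space. Let $n := b - a + 1$ and identify a sequence $(x_a, x_{a+1}, \ldots, x_b)$ with a vector in $\R^n$. The hypothesis
$$x_i \le A_i + \sum_{j=a}^{i-1} B_j x_j, \qquad a \le i \le b,$$
is exactly a coordinatewise inequality of the form $x \le A + Kx$, where $K$ is the strictly lower-triangular matrix with entries $k_{ij} := B_j$ for $j < i$ and $k_{ij} := 0$ for $j \ge i$. Since each $B_j \ge 0$, the matrix $K$ has nonnegative entries, so $K$ is positive in the lattice sense; and since $K$ is strictly lower-triangular, it is nilpotent, whence its only eigenvalue is $0$ and $\rho_K = 0$. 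Thus the condition $B\rho_K < 1$ of Lemma \ref{gronwall-Rn} holds trivially with $B = 1$, and that lemma yields $x \le y$ with $y = (I - K)^{-1}A$.

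It remains to identify $(I-K)^{-1}A$ explicitly with the claimed closed form. The key step is the computation of the resolvent $(I-K)^{-1}$. Because $K$ is nilpotent, the Neumann series terminates and
$$(I - K)^{-1} = \sum_{m=0}^{\infty} K^m = \sum_{m=0}^{n-1} K^m,$$
so $y = \sum_{m \ge 0} K^m A$. The component $y_i$ is then $A_i$ (the $m=0$ term, since $K^0 = I$) plus contributions from the powers $K^m$ with $m \ge 1$. Writing out the matrix products, the $(i,j)$ entry of $K^m$ is a sum over chains $j = \ell_0 < \ell_1 < \cdots < \ell_{m-1} < \ell_m = i$ of products $B_{\ell_0} B_{\ell_1} \cdots B_{\ell_{m-1}}$, reflecting the strict lower-triangular structure. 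Summing these contributions over all $m \ge 1$ and collecting them by the value of the innermost index $j$, one finds that the total coefficient of $A_j$ in $y_i$ (for $j < i$) is
$$B_j \prod_{s=j+1}^{i-1} (1 + B_s),$$
since summing over all strictly increasing chains from $j+1$ up to $i-1$ amounts to the product of factors $(1 + B_s)$, each factor $B_s$ recording whether the index $s$ is included in the chain. This produces precisely
$$y_i = A_i + \sum_{j=a}^{i-1} A_j B_j \prod_{s=j+1}^{i-1}(1 + B_s),$$
which is the asserted bound.

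I expect the main obstacle to lie entirely in the combinatorial bookkeeping of this last step: verifying that summing the entries of all powers $K^m$ collapses to the product $\prod_{s=j+1}^{i-1}(1+B_s)$. The cleanest way to handle this, and the route I would actually write up, is not to expand the Neumann series at all but to verify directly that the proposed vector $y$ satisfies the fixed-point identity $y = A + Ky$, i.e. that
$$y_i = A_i + \sum_{j=a}^{i-1} B_j y_j \qquad \text{for all } i.$$
Since $\rho_K = 0 < 1$, the equation $y = A + Ky$ has a unique solution, so it suffices to check this identity for the stated formula. This reduces the problem to a single finite induction on $i$: assuming the identity down to index $i-1$, one substitutes the closed form for each $y_j$ into the right-hand side and rearranges, where the only nontrivial algebraic fact needed is the telescoping relation $1 + \sum_{s=j+1}^{i-1} B_s \prod_{r=s+1}^{i-1}(1+B_r) = \prod_{s=j+1}^{i-1}(1+B_s)$, itself a routine induction. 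This bypasses the chain-counting argument and keeps the verification mechanical.
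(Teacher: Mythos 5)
Your proposal is correct and follows essentially the same route as the paper: both reduce the statement to the matrix version (Lemma \ref{gronwall-Rn}) with $B=1$ and the strictly lower-triangular nilpotent matrix $k_{ij}=B_j$ for $j<i$ (so $\rho_K=0$), and then identify $y=(I-K)^{-1}A$ with the closed form. The paper disposes of that last identification by saying it ``follows by induction or from variation of parameters,'' which is precisely the fixed-point/uniqueness verification you spell out, so your write-up just supplies the details (the telescoping identity) that the paper leaves implicit.
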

\begin{rem}
As usual, in the last formula it is understood  that the product is equal to  $1$ when $j=i-1$. 
\end{rem}

\begin{proof}
    It suffices to consider, in the previous lemma, $B=1$ and the nilpotent matrix defined as
    $$K:=\left(\begin{array}{ccccc}
        0 & 0 & \ldots & {} & {}\\
        B_1 & 0 & \ldots & {} & {}\\
       B_1 & B_2 & 0 & \ldots & {} \\
        \ldots & \ldots & {} &{}&\\
        B_1 & B_2 & \ldots & B_{n-1} & 0
    \end{array}
    \right). 
    $$
     The computation of $y$ follows by induction or, alternatively, from the method of variation of parameters  applied to the discrete linear system given by $Y_a=A_a$ and $Y_{k} = A_{k}+ \displaystyle\sum_{j=a}^{k-1} B_jY_j$ for $k>a$.     
\end{proof}

\begin{rem}
As before, the previous proof relies on the fact that $\rho_K=0$ and can be easily extended to 
prove an 
exact analog of (\ref{nonconstant A}) in the discrete context:
if 
    $$x_i\le A_i + C_i\sum_{j=a}^{i-1} B_jx_j\qquad a\le i \le b$$
with $B_i, C_i\ge 0$ for all $j$, then
$$
x_i\le A_i + C_i\sum_{j=a}^{i-1} A_jB_j {\prod_{s=j+1}^{i-i}(1+B_sC_s)}\qquad a\le i\le b.$$     
It is said in \cite[Ch. XIV]{mitri} that 
the latter inequality is the best possible and, again, this comment agrees with the above discussion. 
Several other results for discrete Gronwall-type inequalities may be found in the works of J. Popenda, as recalled in \cite{schm}. 

\end{rem}
 
 \section{A non-Fredholm example: strong maximum principle for $-\Delta$}
\label{lapl} 

In this section, we shall attempt to connect the Gronwall inequality with the well-known maximum principle for elliptic operators. For simplicity, 
our study is restricted to a simple equation in a smooth domain, although much more general results can be obtained. 

Let  $\Omega\subset\R^n$ 
be a smooth bounded domain and consider $X:=C(\overline \Omega)$ 
with the standard uniform norm. Set 
$$L:=-\Delta: D\subset X\to X,$$
where the domain $D$ shall be taken as the set of those functions $x\in C^{1}(\overline\Omega)$ such that $\Delta x$ exists and can be extended
continuously to $\overline\Omega$,
The kernel of $L$ is the 
subspace of $D$ of those functions that are harmonic in $\Omega$.  
Let us consider the closed subspace of those functions that vanish on the boundary, namely
$$Z=\lbrace x\in C(\overline \Omega): \ x|_{\partial \Omega }=0\rbrace.$$ 
As is well known, the weak maximum principle for subharmonic functions states 
that if $x\in \ker(L)$, then its maximum  value over $\overline \Omega$ is achieved on $\partial\Omega$. 
Moreover, $L$ has a compact right inverse $K:X\to Z\subset X$ given by
$$Kx(t)=\int_\Omega G(t,s)x(s)\, ds,
$$
where $G$ is the Green function associated to the homogeneous Dirichlet problem, that is, if $z=Kx$ then
$$-\Delta z= x,\qquad z|_{\partial \Omega}=0.
$$
The weak maximum principle implies that $G\ge 0$, that is, $K$ is a positive operator. 
Here, $K$ is also positive in the  
$L^2$ sense; more precisely,   the 
set of eigenvalues  of $K$ consists in a sequence of real values $\mu_n\searrow 0$ and the first one is simple. The value $\lambda_1:=\frac 1{\mu_1}>0$ is referred in the literature as the first eigenvalue of $-\Delta$. Thus, in the context of Theorem \ref{Gronwall abstracto}, the condition $s>\rho_K$  simply reads $B<\lambda_1$, where $B:=\frac 1s$.

 \begin{prop} (Maximum principle)
Let $x\in D$ be such that  
$$
-\Delta x \leq Bx,
\qquad x|_{\partial \Omega} \leq 0
 $$
 with $0\le B<\lambda_1$. Then $x\leq 0$.
\end{prop}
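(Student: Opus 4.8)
The plan is to recognize this proposition as a direct instance of Theorem~\ref{main}, so the entire argument reduces to verifying that the hypotheses of that theorem hold in the present concrete setting. First I would set the stage by invoking the structure already assembled in this section: the Banach lattice is $X=C(\overline\Omega)$ with the pointwise order, the operator is $L=-\Delta$ on the dense domain $D$, and its positive right inverse is the Green operator $K$, whose positivity follows from the weak maximum principle (equivalently, from $G\ge 0$). The projector is $\mathcal Px:=x-KLx$, and with $z:=KLx$ solving $-\Delta z=-\Delta x$ with zero boundary data, the quantity $\mathcal Px=x-z$ is precisely the boundary-matching harmonic part of $x$, i.e.\ the harmonic function agreeing with $x$ on $\partial\Omega$.

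The next step is to translate the two hypotheses of the proposition into the abstract hypotheses of Theorem~\ref{main}. The differential inequality $-\Delta x\le Bx$ is literally $Lx\le Bx$. The boundary condition $x|_{\partial\Omega}\le 0$ must be shown to imply $\mathcal Px\le 0$: since $\mathcal Px$ is the harmonic function whose boundary trace equals $x|_{\partial\Omega}\le 0$, the weak maximum principle for subharmonic functions (here applied to the harmonic $\mathcal Px$) gives that its maximum over $\overline\Omega$ is attained on $\partial\Omega$, whence $\mathcal Px\le \max_{\partial\Omega} x|_{\partial\Omega}\le 0$ throughout $\overline\Omega$. I would also record the spectral condition: the relation between eigenvalues already established, $\lambda_1=1/\mu_1$ with $\mu_1=\rho_K$ (the largest eigenvalue of the positive compact operator $K$), shows that $0\le B<\lambda_1$ is exactly $B\rho_K<1$.

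With these identifications in hand, the conclusion $x\le 0$ follows immediately by applying Theorem~\ref{main}. I would close by noting that this is the whole content of the proof: no new estimate is needed, only the dictionary between the elliptic objects and the abstract ones.

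The main obstacle, and the only point requiring genuine care rather than bookkeeping, is the verification that $\mathcal Px\le 0$. This hinges on correctly identifying $\mathcal Px=x-KLx$ with the harmonic extension of the boundary data of $x$ and then invoking the weak maximum principle in the right direction; one must be careful that $KLx$ is well defined (that $Lx=-\Delta x$ lies in $X$, which is guaranteed by the choice of $D$) and that the decomposition $x=\mathcal Px+KLx$ genuinely splits off the harmonic boundary contribution. Once that identification is secured, the spectral translation $B\rho_K<1$ and the final appeal to Theorem~\ref{main} are routine.
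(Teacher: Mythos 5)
Your proof is correct and takes essentially the same route as the paper's: both reduce the proposition to the abstract machinery by noting that $\mathcal Px\in\ker(L)$ is harmonic, that $K:X\to Z$ forces $\mathcal Px|_{\partial\Omega}=x|_{\partial\Omega}\le 0$, hence $\mathcal Px\le 0$ by the weak maximum principle, and that $0\le B<\lambda_1$ is exactly the spectral condition $B\rho_K<1$. The only cosmetic difference is that you invoke Theorem \ref{main} uniformly (which covers $B=0$ as well), whereas the paper disposes of $B=0$ separately and then appeals to Theorem \ref{Gronwall abstracto}.
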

\begin{proof} When $B= 0$ the result is trivial.
For $B>0$, observe that $\mathcal P x\in \ker(L)$ 
and, because $K:X\to Z$, it follows that   $\mathcal{P}x|_{\partial \Omega}=x|_{\partial \Omega}\le 0$. This implies $\mathcal{P}x\le 0$
and the conclusion follows then from Theorem 
   \ref{main}.
\end{proof}

\begin{rem}
It is clear that, 
unlike the Gronwall Lemma,  
    the latter result is still valid when $B<0$. 
    However, 
    in this case the proof  
    follows from a direct argument and not as a consequence of Theorem
    \ref{Gronwall abstracto}. 
        More generally, the constant $B$ may be replaced by a function $b\in C(\overline \Omega)$ such that $b(s) <\lambda_1$ for all $s\in\overline\Omega$. 
    
\end{rem}

\section{Uniqueness and continuous dependence}
\label{uniq}

As mentioned in the introduction, the Gronwall lemma is commonly employed for proving uniqueness for  problems of different kinds. This usually requires a Lipschitz condition on the nonlinear terms of the equation. So a natural question arises: how can this condition  be extended to the abstract setting? 
It seems reasonable to consider the following:

\begin{defn} 
Let $X$ be a Banach lattice and let  $N:\Omega\to X$ be an operator, where $\Omega$ is an open subset of $X$. We shall say that $N$ is  Lipschitz 
in the lattice sense if there exists a constant $C>0$ such that 
$$|N(x)-N(y)|\leq C |x-y|$$ for all $x,y\in \Omega$. 
\end{defn}

The following lemma will be useful for our purposes. 

\begin{lem} Let $X$ be a Banach lattice and let $M\in \mathcal L(X)$ be positive. Then $|Mx|\le M |x|$ for all $x\in X$. 
\end{lem}
\begin{proof}
Since $x, -x\le |x|$, it follows that
$$Mx\le M|x|, \qquad  -Mx=M(-x) \le M|x|. 
$$
By definition of supremum, we conclude:
$$|Mx| =Mx\vee - Mx  \le M|x|.
$$ 
\end{proof}

We are now in condition of establishing  a general  result: 
\begin{thm}
Consider the problem 
\begin{equation}
\label{unic}
Lx=N(x),\qquad \mathcal{P}x=x_0
\end{equation}
  where 
 $L$ and $\mathcal{P}$ are like in 
 Theorem \ref{main} and 
 $N:\Omega\subset X\to X$ is Lipschitz in the lattice sense with constant $C>0$ satisfying 
 $C\rho_K<1$ and $x_0\in \ker(L)$. 
 Then (\ref{unic}) has at most one solution in $D\cap \Omega$. 
 Furthermore, if $K$ is compact, then the mapping $x_0\mapsto x$ is continuous and its domain is open in $\ker (L)$.
\end{thm}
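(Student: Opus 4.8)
The plan is to establish the three assertions—uniqueness, continuity, and openness of the domain—in sequence, with the abstract Gronwall inequality (Theorem \ref{main}) serving as the central tool. First I would prove uniqueness. Suppose $x_1, x_2 \in D\cap\Omega$ both solve (\ref{unic}) with the same data $x_0$. Setting $w := x_1 - x_2$, linearity of $L$ and $\mathcal P$ gives $Lw = N(x_1) - N(x_2)$ and $\mathcal P w = 0$. The Lipschitz-in-the-lattice hypothesis yields $|N(x_1)-N(x_2)|\le C|x_1-x_2| = C|w|$, hence $Lw \le |Lw| = |N(x_1)-N(x_2)| \le C|w|$. To apply Theorem \ref{main} I want an inequality of the form $L(\cdot)\le C(\cdot)$ with $\mathcal P(\cdot)\le 0$, but the right-hand side involves $|w|$ rather than $w$ itself, so the natural move is to apply the argument to $|w|$ or, more carefully, to run Theorem \ref{main} twice—once to bound $w$ from above and once (replacing $w$ by $-w$) to bound $-w$ from above. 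Since $Lw\le C|w|$ and likewise $L(-w)\le C|w|$, and since $\mathcal P(\pm w)=0\le 0$, I would need to feed $C|w|$ into the right slot; the cleanest route is to observe that both $w$ and $-w$ satisfy $L(\pm w)\le C|w|$ and conclude via the positive-resolvent estimate that $\pm w \le 0$, giving $|w|\le 0$ and hence $w=0$ by Proposition \ref{modulo igual a cero}. The condition $C\rho_K<1$ is exactly what licenses the Gronwall step.

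Next I would address continuous dependence, assuming now that $K$ is compact. Let $x_0, \tilde x_0\in\ker(L)$ admit solutions $x,\tilde x\in D\cap\Omega$. Applying $K$ to $Lx=N(x)$ and using $KLx = x-\mathcal Px = x - x_0$ converts the problem to the fixed-point form $x = x_0 + KN(x)$, and similarly $\tilde x = \tilde x_0 + KN(\tilde x)$. Subtracting gives $x-\tilde x = (x_0-\tilde x_0) + K(N(x)-N(\tilde x))$. Taking absolute values and using Proposition \ref{modulo igual a cero}(2) together with the lemma $|M\xi|\le M|\xi|$ for positive $M=K$ and the Lipschitz bound, I obtain $|x-\tilde x|\le |x_0-\tilde x_0| + CK|x-\tilde x|$. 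This is precisely the standard-form Gronwall hypothesis of Corollary \ref{y(s,A)} after dividing: with $s:=1$ it is not quite in range unless $\rho_K<1/C$, which holds by $C\rho_K<1$. Rewriting as $\frac1C|x-\tilde x|\le \frac1C|x_0-\tilde x_0| + K|x-\tilde x|$ and invoking Corollary \ref{y(s,A)} with $s=1/C>\rho_K$ and $A=\frac1C|x_0-\tilde x_0|$ yields $|x-\tilde x|\le y(s,A)=(sI-K)^{-1}\frac1C|x_0-\tilde x_0| = (I-CK)^{-1}|x_0-\tilde x_0|$. Applying the norm inequality (\ref{normal}) and Proposition \ref{equiv-norm}, boundedness of $(I-CK)^{-1}$ gives $\|x-\tilde x\|\le \|(I-CK)^{-1}\|\,\|x_0-\tilde x_0\|$, which is the desired Lipschitz continuity of $x_0\mapsto x$.

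Finally, for openness of the domain, let $x_0$ be a value for which a solution $x\in D\cap\Omega$ exists. The strategy is a fixed-point argument: define $T_{\tilde x_0}(u):= \tilde x_0 + KN(u)$ and show that for $\tilde x_0$ near $x_0$ the map $T_{\tilde x_0}$ has a fixed point in a closed ball around $x$ contained in $\Omega$. The key estimate is contractivity with respect to the lattice structure: the computation above shows $|T_{\tilde x_0}(u)-T_{\tilde x_0}(v)| = |K(N(u)-N(v))|\le CK|u-v|$, and I would want to deduce a genuine contraction in the norm. Here I expect the main obstacle: the operator $CK$ need not have operator norm less than $1$ even though $C\rho_K<1$, so a direct Banach-contraction argument on $T_{\tilde x_0}$ may fail, and I would instead lean on the resolvent positivity to control iterates, or equivalently renorm $X$ using an equivalent lattice norm under which $CK$ becomes a strict contraction. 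Once a fixed point $\tilde x$ is produced for each $\tilde x_0$ in a neighborhood of $x_0$, the earlier continuity estimate guarantees $\tilde x$ stays inside $\Omega$ for $\tilde x_0$ sufficiently close, so the solution set of data is open and the plan is complete.
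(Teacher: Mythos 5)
Your uniqueness and continuous dependence arguments are, in substance, the paper's own. The paper works directly with the fixed-point form $x_i=x_0+KN(x_i)$, so that $x_1-x_2=K[N(x_1)-N(x_2)]$, and then $|x_1-x_2|\le K|N(x_1)-N(x_2)|\le CK|x_1-x_2|$ by the positivity lemma and the Lipschitz hypothesis; Theorem \ref{Gronwall abstracto} applied to $z:=|x_1-x_2|$ with $s=C^{-1}>\rho_K$ gives $z\le 0$, hence $z=0$ by Proposition \ref{modulo igual a cero}. Your detour through $L$ reaches the same point, but the logic is stated out of order: nothing lets you ``conclude that $\pm w\le 0$'' directly from $L(\pm w)\le C|w|$, and Theorem \ref{main} is never applicable because its hypothesis needs the same element on both sides of the inequality. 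The correct completion is to apply the positive operator $K$ to both inequalities, use $KL(\pm w)=\pm w-\mathcal P(\pm w)=\pm w$ to get $\pm w\le CK|w|$, take the supremum to obtain $|w|\le CK|w|$, and only then invoke Theorem \ref{Gronwall abstracto}; conclusions in the order $|w|\le 0$ first, then $w=0$. This is a one-line repair. Your continuity estimate $|x-\tilde x|\le (I-CK)^{-1}|x_0-\tilde x_0|$, obtained via Corollary \ref{y(s,A)} and converted to norms through (\ref{normal}) and Proposition \ref{equiv-norm}, coincides with the paper's.

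The genuine gap is in the openness part, exactly at the point you flagged as ``the main obstacle'': you never actually produce the fixed point. Your first remedy (controlling iterates of $T_{\tilde x_0}$) stumbles on the fact that $N$ is defined only on $\Omega$, so before any iterate estimate makes sense you need a closed $T_{\tilde x_0}$-invariant subset of $\Omega$, and a norm ball is not invariant when $\|CK\|\ge 1$ no matter how close $\tilde x_0$ is to $x_0$. Your second remedy (an equivalent lattice norm making $CK$ a strict contraction) is asserted, not proved: it requires the spectral \emph{radius} of $CK$ to be less than $1$, and the hypothesis $C\rho_K<1$ concerns only the spectral \emph{bound}; these agree for positive operators, but via the nontrivial theorem that the spectral radius of a positive operator on a Banach lattice belongs to its spectrum, and one must additionally build the equivalent norm so that it remains compatible with the order, since your estimate $|T_{\tilde x_0}(u)-T_{\tilde x_0}(v)|\le CK|u-v|$ is a lattice inequality, not a norm inequality. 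Note also that your sketch makes no use of the compactness of $K$, which the theorem assumes precisely for this part, and which the paper uses decisively: it writes the problem as $x=F(x)$ with $F(x):=\hat x+x_0-\hat x_0+K(N(x)-N(\hat x))$ compact on $\overline B_r(\hat x)\subset\Omega$, runs the homotopy $F_\lambda(x):=\hat x+\lambda\left[x_0-\hat x_0+K(N(x)-N(\hat x))\right]$, uses the same Gronwall a priori bound to show that $F_\lambda$ has no fixed point on $\partial B_r(\hat x)$ once $\|x_0-\hat x_0\|<r/\eta$ with $\eta:=\|(I-CK)^{-1}\|$, and concludes from homotopy invariance of the Leray--Schauder degree that $\deg(I-F,B_r(\hat x),0)=1$, whence $F$ has a fixed point in $B_r(\hat x)$. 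To salvage your route you would have to either adopt this degree argument, or carry out the renorming lemma in full, or work on the invariant order set $\{u:\ |u-\hat x|\le (I-CK)^{-1}|x_0-\hat x_0|\}$ (which is closed, convex, mapped into itself by $F$, and contained in $\Omega$ for $x_0$ near $\hat x_0$) and apply Schauder's theorem there; as it stands, the existence step is missing.
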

\begin{proof}
Suppose that  $x_1, x_2\in D\cap\Omega$ are solutions of (\ref{unic}), then
$$x_i=x_0 + KN(x_i).
$$
Hence 
$$x_1-x_2=K[N(x_1)-N(x_2)]$$
and, from the preceding lemma, we deduce:
$$|x_1-x_2|\leq K|N(x_1)-N(x_2)|\le CK|x_1-x_2|. $$
Set $z:=|x_1-x_2|$, then  
$$C^{-1}z\le Kz$$
and from Theorem   \ref{Gronwall abstracto} we conclude that $z\leq 0$. Thus, Proposition \ref{modulo igual a cero} yields $z=0$. 

Next, assume that $x$ and $\hat x$ are solutions corresponding to some values 
$x_0, \hat x_0 \in \ker(L)$. As before, it is deduced that 
$$x-\hat x=x_0-\hat x + K(N(x)-N(\hat x)),
$$
whence 
 $$|x-\hat x|\leq |x - \hat x_0 | +  CK|x-\hat x|, $$
that is 
$$|x-\hat x| \le [I-CK]^{-1}|x_0 - \hat x_0|
$$
and the continuous dependence follows trivially from the continuity of the operator $[I-CK]^{-1}$, together with 
Proposition \ref{equiv-norm}.  

To conclude, 
 assume that $K$ is compact and that $\hat x$ is a solution for a certain $\hat x_0$. We want to solve the problem for $x_0$ close enough to $\hat x_0$ or, equivalently, to find $x$ such that $x=x_0 + KN(x)$. This may be transformed in the following fixed point problem:
$$x = \hat x + x_0 - \hat x_0 + K(N(x)-N(\hat x)):= F(x),
$$
where the operator $F:\Omega\to X$ is well defined and 
compact. 
Fix $r>0$ such that  
$\overline B_r(\hat x)\subset \Omega$, where $B_r(\hat x)$ is the open ball of radius $r$ centered at $\hat x$. 
For $\lambda\in [0,1]$, let $F_\lambda(x):= \hat x + \lambda\left[x_0 - \hat x_0 + K(N(x)-N(\hat x))\right]$ and assume that $x= F_\lambda (x)$ for some $x\in \partial  B_r(\hat x)$. 
Again, it follows that
$$|x-\hat x| \le \lambda(|x_0-\hat x_0| + CK|x-\hat x|) \le |x_0-\hat x_0| + CK|x-\hat x|,
$$
so 
$$|x-\hat x|\le [I-CK]^{-1}|x_0-\hat x_0|.$$
From Proposition \ref{equiv-norm}, this implies
$$r=\|x-\hat x\|\le \eta\|x_0-\hat x_0\|,
$$
where $\eta$ is the norm of the operator $[I-CK]^{-1}$. 
It  is deduced that $F_\lambda$ does not have fixed points on $\partial B$ when
$\|x_0-\hat x_0\| < \frac r\eta$ and, from the homotopy invariance of the Leray-Schauder degree,
$$\deg(I-F,B_r(\hat x),0)= \deg(I-F_0, B_r(\hat x),0) = \deg(I,\overline B_r(\hat x),\hat x)=1. 
$$
This implies that $F$ has a fixed point in $B_r(\hat x)$, and so concludes the proof.  
 \end{proof}

\section{Concluding remarks}
\label{disc}
 
As a consequence of the previous 
section, if the Lipschitz assumption is global, namely $\Omega=X$, then 
the domain of the map $x_0\mapsto x$ associated to problem 
(\ref{unic})   is either empty or $\ker(L)$. Indeed, it suffices to observe, 
in the previous proof, that  the choice of $r$ is arbitrary and guarantees the existence of 
a solution for $x_0$ in a ball of radius $\frac r\eta$ centered at $\hat x_0$. 

It is noticed,  in the particular case (\ref{ivp}), 
that such a global condition guarantees the 
existence of a unique solution for arbitrary $x_0$, without any restriction on the 
Lipschitz constant 
$C$. At first sight, this might look striking: we know that the size of the 
constant is not 
relevant for the uniqueness, because $\rho_K=0$, but
the fixed point operator $T(x):=x_0 + KN(x)$ is not a contraction unless the Lipschitz  constant 
is small. 
Incidentally, observe that 
property (\ref{normal}) becomes very important here, since the previously deduced inequality 
$$|x_1-x_2|\le CK |x_1-x_2|,
$$
together with Proposition \ref{equiv-norm} imply
$$\|x_1-x_2\|\le C \|K |x_1-x_2|\| \le C\|K\| \|x_1-x_2\|,
$$
so $T$ is a contraction when $C\|K\|<1$. 
Fortunately, the latter restriction is not needed at all: 
as is well known,  a standard strategy consists in proving that local solutions can be glued together until the whole interval $[a,b]$ is covered. 

Certainly, the preceding argument cannot be extended to  the abstract setting, since 
 there is no such a thing we may  call a ``local solution". 
 This issue is hidden in (\ref{ivp}) due to the crucial fact that, 
 roughly speaking, the operators $K, L$ and $N$ commute 
 with the restriction operator. 
 This is why the fixed point equation $x=T(x)$ looks the same
 when considered over $[a,a+\delta]$ instead 
 of the whole interval $[a,b]$. 
 Thus, an attempt to extend the idea of local solution to an abstract context 
 could be done 
 by defining Banach spaces $X_\delta$ and operators $r_\delta:X\to X_\delta$, $K_\delta, N_\delta:X\to X$ such that $r_\delta KN=K_\delta N_\delta r_\delta$, so the original equation, when translated into the space $X_\delta$, takes the form
 $$z= r_\delta x_0 + K_\delta N_\delta z.
 $$
In the standard case (\ref{ivp}), 
the proof of local existence takes advantage 
of the fact that the norm of the associated integral operator, 
 regarded as an endomorphism of $X_\delta$, tends to $0$ as $\delta\to 0$; thus,
  choosing an appropriate value of $\delta>0$ it is immediately verified  that the operator $T_\delta:=r_\delta + K_\delta N_\delta$  is a contraction

 In contrast with the previous situation, 
uniqueness results for the equation $-\Delta x=f(t,x)$ under Dirichlet conditions  usually require a  global Lipschitz condition with small constant. This is due to the fact that, here,   $\rho_K>0$.  
It is well known that the Lipschitz condition may 
be replaced by a one-side 
growth condition on $f$, namely
$$[f(t,x)-f(t,y)](x-y) \le c(x-y)^2
$$
for some constant $c < \lambda_1$. However, this latter assumption makes use of the Hilbert structure: if $x$ and $y$ are solutions, then 
$$\langle -\Delta (x-y),x-y\rangle =\langle f(\cdot,x)-f(\cdot,y),x-y\rangle \le c\|x-y\|^2,
$$
from where it is deduced that $\|x-y\|^2\le \frac c{\lambda_1}\|x-y\|^2$, that is, 
$x=y$.

\section*{Declarations}

{\bf Funding and/or Conflicts of interests/Competing interests}
\medskip

\noindent This work was  supported by the projects CONICET PIP 11220130100006CO and UBACyT 20020160100002BA.
The authors declare that there are no  conflicts of interests or competing interests.

\end{document}